\def\version{June 4, 2010}
\newtheorem{theorem}{Theorem}[section]
\newtheorem{proposition}[theorem]{Proposition}
\newtheorem{lemma}[theorem]{Lemma}
\newtheorem{corollary}[theorem]{Corollary}
\theoremstyle{definition}
\theoremstyle{remark}
\newtheorem{remark}[theorem]{Remark}
\newcommand{\dd}{\,{\rm d}}
\newcommand{\e}[1]{\,{\rm e}^{#1}\,}
\def\Re{{\operatorname{Re\,}}}
\newcommand\eps\varepsilon
\newcommand{\C}{\mathbb{C}}
\renewcommand{\epsilon}{\varepsilon}
\newcommand{\R}{\mathbb{R}}
\renewcommand{\phi}{\varphi}
\DeclareMathOperator{\sgn}{sgn}
\begin{document}

{\hfill\small \version} \vspace{2mm}

\title[Schr\"odinger operators on the halfline]{A sharp bound on eigenvalues of Schr\"odinger operators on the halfline with complex-valued potentials}

\author[R. Frank]{Rupert L. Frank}
\address{Rupert L. Frank, Department of Mathematics, Princeton University, Princeton, NJ 08544, USA}
\email{rlfrank@math.princeton.edu}

\author[A. Laptev]{Ari Laptev}
\address{Ari Laptev, Department of Mathematics, Imperial College London, London SW7 2AZ, UK 
$\&$ Department of Mathematics, Royal Institute of Technology, 100 44 Stockholm, Sweden}
\email{a.laptev@imperial.ac.uk $\&$ laptev@math.kth.se}

\author[R. Seiringer]{Robert Seiringer}
\address{Robert Seiringer, Department of Physics, Princeton University, Princeton, NJ 08544, USA}
\email{rseiring@princeton.edu}

\thanks{\copyright\, 2009 by the authors. This paper may be reproduced, in its entirety, for non-commercial purposes.\\
Support through DFG grant FR 2664/1-1 (R.F.) and U.S. National Science Foundation grants PHY 0652854 (R.F.) and PHY 0652356 (R.S.) is gratefully acknowledged.}

\begin{abstract}
We derive a sharp bound on the location of non-positive eigenvalues of Schr\"odinger operators on the halfline with complex-valued potentials.
\end{abstract}

\maketitle

\section{Introduction and main result}

In this note we are concerned with estimates for non-positive eigenvalues of one-dimensional Schr\"odinger operators with complex-valued potentials. We shall provide an example of a bound where the sharp constant \emph{worsens} when a Dirichlet boundary condition is imposed. This is in contrast to the case of real-valued potentials, where the variational principle implies that the absolute value of the non-positive eigenvalues decreases.

In order to describe our result, we first assume that $V$ is real-valued. It is a well-known fact (attributed to L. Spruch in \cite{K}) that any negative eigenvalue $\lambda$ of the Schr\"odinger operator $-\partial^2 - V$ in $L^2(\R)$ satisfies
\begin{equation}
 \label{eq:spruch}
|\lambda|^{1/2} \leq \frac 12 \int_{-\infty}^\infty |V(x)| \dd x \,.
\end{equation}
The constant $\frac 12$ in this inequality is sharp and attained if $V(x)=c\delta(x-b)$ for any $c>0$ and $b\in\R$. (It follows from the Sobolev embedding theorem that the operator $-\partial^2 - V$ can be defined in the quadratic form sense as long as $V$ is a finite Borel measure on $\R$. In this case the right side of \eqref{eq:spruch} denotes the total variation of the measure.) From \eqref{eq:spruch} and the variational principle for self-adjoint operators we immediately infer that any negative eigenvalue of the operator $-\partial^2 - V$ in $L^2(0,\infty)$ with Dirichlet boundary conditions satisfies
\begin{equation}
 \label{eq:spruchd}
|\lambda|^{1/2} \leq \frac 12 \int_{0}^\infty |V(x)| \dd x \,.
\end{equation}
The constant $\frac 12$ in this inequality is still sharp but no longer attained.

Motivated by concrete physical examples and problems in computational mathematics, an increasing interest in eigenvalue estimates for \emph{complex-valued} potentials has developed in recent years. A beautiful observation of \cite{AAD} is that \eqref{eq:spruch} remains valid for all eigenvalues in $\C\setminus [0,\infty)$ even if $V$ is complex-valued. The same is not true for \eqref{eq:spruchd} ! Indeed, our main result is

\begin{theorem}
 \label{main}
For $a\in\R$ let
\begin{equation}
 \label{eq:g}
g(a) := \sup_{y\geq 0} \left| \e{iay} - \e{-y}\right|\,.
\end{equation}
Any eigenvalue $\lambda = |\lambda| e^{i\theta} \in\C\setminus [0,\infty)$ of the operator $-\partial^2 - V$ in $L^2(0,\infty)$ with Dirichlet boundary conditions satisfies
\begin{equation}
 \label{eq:main}
|\lambda|^{1/2} \leq \frac 12\, g(\cot(\theta/2))\, \int_0^\infty |V(x)| \dd x \,.
\end{equation}
This bound is sharp in the following sense: For any given $m>0$ and $\theta\in (0,2\pi)$ there are $c\in\C$ and $b>0$ such that for $V(x) = c \delta(x-b)$ one has $|c| = \int |V(x)| \dd x= m$ and the unique eigenvalue of $-\partial^2 - V$ is given by $ (m^2/ 4)\, g(\cot(\theta/2))^2 e^{i\theta}$, that is, equality is attained in \eqref{eq:main}.
\end{theorem}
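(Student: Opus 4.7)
The plan is to convert the differential eigenvalue problem into an integral equation and then bound the kernel pointwise. Write $\lambda=-k^2$ with $\re k>0$, which is possible precisely because $\lambda\in\C\setminus[0,\infty)$. A standard variation-of-parameters computation gives the Dirichlet Green's function of $-\partial^2-\lambda$ on $(0,\infty)$,
\[
G_\lambda(x,y)=\frac{\sinh(k\min(x,y))\,\e{-k\max(x,y)}}{k}=\frac{\e{-k|x-y|}-\e{-k(x+y)}}{2k}\,.
\]
Since $\lambda$ lies in the resolvent set of the free Dirichlet Laplacian, any eigenfunction $\psi$ of $-\partial^2-V$ satisfies the fixed-point identity $\psi(x)=\int_0^\infty G_\lambda(x,y)\,\psi(y)\,\dd V(y)$ (interpreted against the measure $V$ in the distributional case). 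Eigenfunctions are continuous and bounded on $[0,\infty)$, so taking absolute values, then the supremum over $x$, and finally dividing by $\|\psi\|_\infty\neq 0$ yields
\[
1\leq\Bigl(\sup_{x,y\geq 0}|G_\lambda(x,y)|\Bigr)\int_0^\infty|V(x)|\,\dd x\,.
\]

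The core of the argument is the evaluation of this pointwise supremum. Parametrize by $u=|x-y|\geq 0$ and $w=2\min(x,y)\geq 0$, which are independent on $[0,\infty)^2$; then
\[
|G_\lambda(x,y)|=\frac{\e{-u\re k}\,|1-\e{-kw}|}{2|k|}\,.
\]
As $\re k>0$ the supremum is attained at $u=0$, leaving $\sup_{x,y\geq 0}|G_\lambda(x,y)|=(2|k|)^{-1}\sup_{w\geq 0}|1-\e{-kw}|$. Writing $k=\sigma+i\tau$ with $\sigma>0$, multiplying by the unimodular factor $\e{i\tau w}$ and substituting $s=\sigma w$ identifies
\[
\sup_{w\geq 0}|1-\e{-kw}|=\sup_{s\geq 0}|\e{i(\tau/\sigma)s}-\e{-s}|=g(\tau/\sigma)\,.
\]
The chosen branch gives $k=|\lambda|^{1/2}\e{i(\theta-\pi)/2}$, hence $\tau/\sigma=\tan((\theta-\pi)/2)=-\cot(\theta/2)$; the function $g$ is even (by complex conjugation inside the modulus), so $g(\tau/\sigma)=g(\cot(\theta/2))$. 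Combining the two displays yields \eqref{eq:main}.

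For sharpness, take $V=c\delta_b$. The fixed-point equation reads $\psi(x)=c\,G_\lambda(x,b)\psi(b)$, and evaluating at $x=b$ forces the resonance condition $c\,G_\lambda(b,b)=1$, i.e. $c=2k/(1-\e{-2kb})$. For $\theta\in(0,2\pi)$ the sup defining $g(\cot(\theta/2))$ is attained at some finite $w_\ast>0$ (the quantity $|1-\e{-kw}|$ vanishes at $w=0$, tends to $1$ as $w\to\infty$, and exceeds $1$ at $w=\pi/|\tau|$ whenever $\tau\neq 0$). Setting $b=w_\ast/2$ and choosing $k$ with $|k|=mg(\cot(\theta/2))/2$ and $\arg k=(\theta-\pi)/2$ produces $|c|=m$ and the eigenvalue $\lambda=-k^2=(m^2/4)\,g(\cot(\theta/2))^2\,\e{i\theta}$, realizing equality in \eqref{eq:main}.

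The main obstacle is the identification of $\sup_{w\geq 0}|1-\e{-kw}|$ with $g(\cot(\theta/2))$: the decoupling of $u$ and $w$ in $|G_\lambda|$ is straightforward, but matching the remaining one-variable supremum to the function $g$ defined in \eqref{eq:g} requires the correct branch of the square root together with the angle identity $\tan((\theta-\pi)/2)=-\cot(\theta/2)$ and the evenness of $g$. Once these bookkeeping points are in place, both the bound and the sharpness example are short computations.
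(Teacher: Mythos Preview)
Your proof is correct and closely parallels the paper's, with one difference in the framing device. The paper invokes the Birman--Schwinger principle: if $\lambda=-\mu$ is an eigenvalue then the operator $V^{1/2}(-\partial^2+\mu)^{-1}|V|^{1/2}$ has eigenvalue $1$, hence $L^2$ operator norm at least $1$, and this norm is bounded by $(\sup_{x,y}|G_\lambda(x,y)|)\,\|V\|_1$ via Cauchy--Schwarz. You instead work directly with the integral equation $\psi=G_\lambda V\psi$ in $L^\infty$, which is a bit more elementary (no factorization of $V$, no operator-norm language) but lands on the identical kernel supremum. From that point on---the decoupling into $u=|x-y|$ and $w=2\min(x,y)$, the reduction to $\sup_{w\geq 0}|1-\e{-kw}|$, the identification with $g(\cot(\theta/2))$, and the delta-potential sharpness construction---the two arguments coincide. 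One small remark: both your argument and the paper's sharpness proof tacitly skip the degenerate case $\theta=\pi$, where $\tau=0$ and the supremum $g(0)=1$ is not attained at any finite $w_\ast$; there the bound is sharp but not achieved by a single delta potential, consistent with the paper's earlier comment that equality in \eqref{eq:spruchd} is never attained.
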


\begin{remark}
  Our bound does not apply to positive eigenvalues. In the case of
  real-valued potential it is known that there are no positive
  eigenvalues if $V\in L^1(\R)$. 
\end{remark}

We note that $1<g(a)<2$ for $a>0$. The following lemma discusses the function $g$ in more detail.

\begin{lemma}\label{g}
For $a\geq 0$, the function $g(a)$ is monotone increasing, with $g(0)= 1$ and $\lim_{a\to \infty} g(a) = 2$. Moreover,
\begin{equation}\label{eq:g0}
g(a) = 1 + O(\e{-\pi/(3a)})
\end{equation}
for small $a$, and 
\begin{equation}\label{eq:ginfty}
g(a) = 2 - \frac \pi a + O(a^{-2}) 
\end{equation}
as $a\to \infty$.
\end{lemma}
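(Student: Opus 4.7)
The plan is to work with the square $f_a(y) := |e^{iay} - e^{-y}|^2 = 1 + e^{-2y} - 2 e^{-y}\cos(ay)$, so that $g(a)^2 = \sup_{y \geq 0} f_a(y)$. Endpoint values are immediate: $f_0(y) = (1-e^{-y})^2 \to 1$ as $y \to \infty$, giving $g(0) = 1$; the triangle inequality $|e^{iay} - e^{-y}| \leq 1 + e^{-y}$ yields $g(a) \leq 2$, while $f_a(\pi/a) = (1 + e^{-\pi/a})^2$ supplies the matching lower bound $g(a) \geq 1 + e^{-\pi/a} \to 2$. At any interior critical point $y^*$ of $f_a$, the condition $f_a'(y^*) = 0$ reads $e^{-y^*} = \cos(ay^*) + a\sin(ay^*)$, and substituting this back into $f_a$ yields the closed-form identity $f_a(y^*) = (1 + a^2)\sin^2(ay^*)$, which will be used throughout.

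For monotonicity, since $g(a)^2 > 1$ for $a > 0$ and $f_a(y) \to 1$ as $y \to \infty$, the supremum is attained at some finite critical point $y^*$. At this point one has $\sin(ay^*) > 0$: otherwise $e^{-y^*} \leq \cos(ay^*)$ would force $f_a(y^*) \leq 1 - e^{-2y^*} < 1$, a contradiction. For $a_1 < a_2$ and $y^* = y^*(a_1)$, I would set $y_2 = (a_1/a_2) y^* < y^*$, so that $\cos(a_2 y_2) = \cos(a_1 y^*)$ and
\[f_{a_2}(y_2) - f_{a_1}(y^*) = (e^{-y_2} - e^{-y^*})\bigl(e^{-y_2} + e^{-y^*} - 2\cos(a_1 y^*)\bigr).\]
The first factor is positive; the second exceeds $2(e^{-y^*} - \cos(a_1 y^*)) = 2 a_1 \sin(a_1 y^*) > 0$. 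Hence $g(a_2)^2 \geq f_{a_2}(y_2) > g(a_1)^2$.

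For the small-$a$ estimate I would split at $y_0 = \pi/(3a)$. On $y \leq y_0$, $\cos(ay) \geq 1/2$ gives $f_a(y) \leq 1 + e^{-2y} - e^{-y} \leq 1$ by the elementary inequality $1 + u^2 - u \leq 1$ for $u \in [0,1]$. On $y \geq y_0$, the triangle inequality gives $\sqrt{f_a(y)} \leq 1 + e^{-y} \leq 1 + e^{-\pi/(3a)}$. Combining yields $g(a) \leq 1 + e^{-\pi/(3a)}$, which is the claimed $g(a) = 1 + O(e^{-\pi/(3a)})$.

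For the large-$a$ asymptotic the lower bound $g(a) \geq 1 + e^{-\pi/a} = 2 - \pi/a + O(a^{-2})$ is immediate. The matching upper bound I would derive from the decomposition $f_a(y) = (1+e^{-y})^2 - 4 e^{-y}\cos^2(ay/2)$: on $y \geq 2\pi/a$, $(1+e^{-y})^2 \leq (1+e^{-2\pi/a})^2 = 4 - 8\pi/a + O(a^{-2})$, well below $(2-\pi/a)^2$; on $y \in [0, 2\pi/a]$, substituting $c = ay$ and Taylor expanding $e^{-c/a} = 1 - c/a + O(a^{-2})$ uniformly yields $f_a(y) = 4 \sin^2(c/2)(1 - c/a) + O(a^{-2})$, whose supremum over $c \in [0, 2\pi]$ is attained near $c = \pi$ with value $4(1 - \pi/a) + O(a^{-2})$. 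Taking square roots gives $g(a) \leq 2 - \pi/a + O(a^{-2})$. The main technical obstacle will be the uniform Taylor control in this last step: the penalty $4 e^{-y}\cos^2(ay/2)$ vanishes precisely at $y = \pi/a$, where $(1+e^{-y})^2$ is largest in the relevant range, so the two competing terms must be balanced carefully to extract the sharp $O(a^{-2})$ remainder.
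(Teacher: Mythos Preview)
Your proposal is correct, and the overall strategy (bound via $f_a(y)=|e^{iay}-e^{-y}|^2$, test at $y=\pi/a$, split at $y=\pi/(3a)$ for small $a$) overlaps with the paper's, but two of your arguments are genuinely different from what the paper does.

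For monotonicity, the paper first pins down the optimizer by showing $\pi/3 < a y_0 \leq \pi$ (via the equivalence $|e^{iay}-e^{-y}|\geq 1 \iff 2\cos(ay)\leq e^{-y}$ and a reflection argument for the upper bound), and then observes that $|e^{iay}-e^{-y}|$ is pointwise increasing in $a$ for each fixed $y\in[0,\pi/a]$; this immediately gives $g$ increasing. Your route is different: you use the critical-point relation $e^{-y^*}=\cos(ay^*)+a\sin(ay^*)$ to deduce $\sin(ay^*)>0$, and then compare $f_{a_1}$ at its optimizer with $f_{a_2}$ at the rescaled point $y_2=(a_1/a_2)y^*$. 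This avoids having to localize $ay_0\leq\pi$, and the closed-form identity $f_a(y^*)=(1+a^2)\sin^2(ay^*)$ you derive along the way is a pleasant byproduct that the paper does not mention. For the large-$a$ asymptotic the paper again works through the optimizer: from $g(a)\geq 2-\pi/a$ it infers $y_0\geq \pi/(2a)$, then $|e^{iay_0}-1|\geq g(a)$, which forces $ay_0=\pi+O(a^{-1})$ and hence $g(a)=2-\pi/a+O(a^{-2})$. Your approach bypasses locating $y_0$ altogether via the decomposition $f_a(y)=(1+e^{-y})^2-4e^{-y}\cos^2(ay/2)$ and a uniform Taylor expansion on $[0,2\pi/a]$. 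Both approaches are elementary; the paper's is shorter because it reuses the localization of $y_0$ already established, while yours is more self-contained and makes the competition between the two terms explicit. Your small-$a$ argument is essentially the paper's, phrased as a domain split rather than as a lower bound on $ay_0$.
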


In Figure~\ref{Fig.1} we plot the curve $\{|z| = g(\cot(\theta/2))^2\}$. It follows from \eqref{eq:ginfty} that this curve hits the positive real axis at the point $4$ with slope $2/\pi$. Close to the point $-1$ the curve coincides with a semi-circle up to exponentially small terms, as \eqref{eq:g0} shows. 

\begin{figure}[htf]
\includegraphics[width=9cm, height=4.5cm]{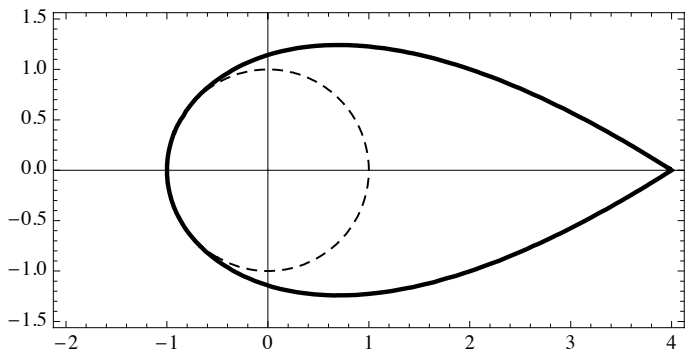}
\caption{The maximal value of $4|\lambda|$ on the half-line with $\int_0^\infty |V(x)|\dd x=1$. The dashed line is the corresponding bound on the whole line.}
\label{Fig.1}
\end{figure}

Using that $\sup_a g(a)=2$ we find

\begin{corollary}
 Any eigenvalue $\lambda \in\C\setminus [0,\infty)$ of the operator $-\partial^2 - V$ in $L^2(0,\infty)$ with Dirichlet boundary conditions satisfies
\begin{equation}
 \label{eq:maincor}
|\lambda|^{1/2} \leq \int_0^\infty |V(x)| \dd x \,.
\end{equation}
The bound is not true in general if the right side is multiplied by a constant $<1$.
\end{corollary}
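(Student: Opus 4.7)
The inequality \eqref{eq:maincor} is an immediate consequence of Theorem~\ref{main} combined with the bound $g(a)\leq 2$ for all $a\in\R$. Indeed, since $g(\cot(\theta/2))\leq \sup_{a\in\R} g(a)=2$ by Lemma~\ref{g}, the right-hand side of \eqref{eq:main} is bounded from above by $\int_0^\infty |V(x)|\dd x$, which gives \eqref{eq:maincor}.

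For the sharpness claim, the plan is to exhibit a sequence of potentials saturating the bound in the limit. Fix $m>0$ and let $\theta_n\to 0^+$ (equivalently $\cot(\theta_n/2)\to\infty$). By the sharpness part of Theorem~\ref{main}, for each $n$ there exist $c_n\in\C$ and $b_n>0$ such that the potential $V_n(x)=c_n\delta(x-b_n)$ satisfies $\int_0^\infty|V_n|\dd x=m$, and such that $-\partial^2-V_n$ has an eigenvalue $\lambda_n$ with
\[
|\lambda_n|^{1/2}=\tfrac12\, g(\cot(\theta_n/2))\, m\,.
\]
By Lemma~\ref{g}, $g(\cot(\theta_n/2))\to 2$ as $n\to\infty$, so $|\lambda_n|^{1/2}/\int_0^\infty|V_n|\dd x\to 1$. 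Consequently, if $C<1$ is any fixed constant, then the inequality $|\lambda|^{1/2}\leq C\int_0^\infty|V|\dd x$ fails for all $n$ sufficiently large, proving optimality of the constant $1$.

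There is essentially no obstacle here, since both assertions are direct corollaries of results already stated: the upper bound uses only the elementary fact $g\leq 2$, and the sharpness uses only the explicit delta-potential examples constructed in Theorem~\ref{main} together with the asymptotic $g(a)\to 2$ as $a\to\infty$ from Lemma~\ref{g}. One should, however, verify that taking $\theta_n\to 0$ (rather than any other direction of approach to $[0,\infty)$) is legitimate, i.e. that the eigenvalues $\lambda_n$ indeed lie in $\C\setminus[0,\infty)$; this is guaranteed by the requirement $\theta_n\in(0,2\pi)$ in the sharpness statement of Theorem~\ref{main}.
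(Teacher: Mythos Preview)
Your proposal is correct and follows essentially the same approach as the paper: the inequality comes from Theorem~\ref{main} together with $\sup_a g(a)=2$, and sharpness from the delta-potential examples of Theorem~\ref{main} with $\theta\to 0^+$ so that $g(\cot(\theta/2))\to 2$. The paper additionally remarks that \eqref{eq:maincor} can alternatively be obtained from the whole-line bound \eqref{eq:spruch} via odd extension of the eigenfunction, but this is a side comment rather than a different proof of the corollary.
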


Inequality \eqref{eq:maincor} follows also from inequality \eqref{eq:spruch} for complex-valued potentials. Indeed, the odd extension of an eigenfunction of the Dirichlet operator is an eigenfunction of the whole-line operator with the potential $V(|x|)$ with the same eigenvalue. The remarkable fact is that the inequality is sharp in the complex-valued case, as shown in Theorem \ref{main}.

By the same argument \eqref{eq:maincor} is also valid if \emph{Neumann} instead of Dirichlet boundary conditions are imposed. In this case equality holds for any $V(x) = c \delta(x)$ with $\Re c>0$. In particular, in the Neumann case (\ref{eq:maincor}) is sharp for any fixed argument $0<\theta<2\pi$ of the eigenvalue $\lambda$. The analogue for mixed boundary conditions is

\begin{proposition}\label{prop1}
Let $\sigma \geq 0$. Any eigenvalue $\lambda \in\C\setminus [0,\infty)$ of the operator $-\partial^2 - V$ in $L^2(0,\infty)$ with boundary conditions $\psi'(0) = \sigma \psi(0)$ satisfies
\begin{equation}\label{eq:maincor2}
|\lambda|^{1/2} \leq \int_0^\infty |V(x)| \dd x \,.
\end{equation}
The bound is sharp for any $\sigma\geq 0$ and any fixed argument $0<\theta<2\pi$ of the eigenvalue $\lambda$.
\end{proposition}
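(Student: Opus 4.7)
My approach is to adapt the Abramov--Aslanyan--Davies integral-equation argument (which yields the whole-line bound \eqref{eq:spruch} in the complex setting) to the Robin half-line problem. Write $\lambda=k^2$ with $\Im k>0$. The Green's function $G_\sigma(x,y;k)$ of $-\partial^2-k^2$ on $(0,\infty)$ with boundary condition $\psi'(0)=\sigma\psi(0)$ and $L^2$-decay at infinity is obtained by gluing $\phi_0(x)=\e{-\ii kx}+r\,\e{\ii kx}$ (with $r=(\ii k+\sigma)/(\ii k-\sigma)$ chosen to enforce the boundary condition at $0$) and $\phi_+(x)=\e{\ii kx}$; an elementary computation (the Wronskian equals $2\ii k$) gives
\begin{equation*}
G_\sigma(x,y;k)=\frac{1}{2\ii k}\Bigl[\e{\ii k|x-y|}+r\,\e{\ii k(x+y)}\Bigr].
\end{equation*}

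The decisive observation for the upper bound is the pointwise estimate $|r|\le 1$ valid for every $\sigma\ge 0$ and $\Im k>0$: writing $k=\kappa+\ii\eta$ with $\eta>0$, one has $|\ii k+\sigma|^2=(\sigma-\eta)^2+\kappa^2\le(\sigma+\eta)^2+\kappa^2=|\ii k-\sigma|^2$ since $\sigma\eta\ge 0$. Consequently $|G_\sigma(x,y;k)|\le(1+|r|)/(2|k|)\le 1/|k|$. Since any eigenfunction $\psi$ is bounded (Sobolev embedding locally, combined with the exponential decay as $x\to\infty$ forced by $\Im k>0$) and satisfies $\psi(x)=\int_0^\infty G_\sigma(x,y;k)V(y)\psi(y)\,\dd y$, taking $\sup_x$ in this identity yields $\|\psi\|_\infty\le|k|^{-1}\int_0^\infty|V|\dd x\cdot\|\psi\|_\infty$, from which \eqref{eq:maincor2} follows at once.

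For sharpness I would test the inequality against boundary-supported potentials $V=c\,\delta(x)$, interpreted either via the quadratic form $\int_0^\infty|\psi'|^2\dd x+(\sigma-c)|\psi(0)|^2$ or equivalently as the $b\to 0^+$ limit of $V_b=c\,\delta(x-b)$, whose eigenvalue equation $1+r\e{2\ii kb}=-2\ii k/c$ reduces to $\ii k=\sigma-c$ in the limit. Either way, the problem becomes $-\partial^2$ with shifted Robin condition $\psi'(0)=(\sigma-c)\psi(0)$, whose only $L^2$-eigenfunction is $\psi(x)=\e{\ii kx}$ with $\ii k=\sigma-c$. Given $\theta\in(0,2\pi)$ and $s>0$, choose
\begin{equation*}
c_s:=\sigma+s\,\e{\ii(\theta-\pi)/2},
\end{equation*}
so that $k=s\,\e{\ii\theta/2}$ automatically has $\Im k=s\sin(\theta/2)>0$, the eigenvalue $\lambda=s^2\e{\ii\theta}$ has argument exactly $\theta$, and $\int_0^\infty|V|\dd x=|c_s|$. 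The ratio $|\lambda|^{1/2}/\int|V|\dd x=s/|\sigma+s\,\e{\ii(\theta-\pi)/2}|$ tends to $1$ as $s\to\infty$, establishing sharpness. The only mild technicality I anticipate is rigorously handling $V=c\delta(x)$ at the boundary point itself, but this is settled by either of the two equivalent interpretations indicated above.
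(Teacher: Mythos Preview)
Your argument is correct and follows essentially the same route as the paper: both proofs compute the Robin Green's kernel explicitly, use the key inequality $|r|=\bigl|\tfrac{\sqrt\mu-\sigma}{\sqrt\mu+\sigma}\bigr|\le 1$ for $\sigma\ge 0$ to bound the kernel pointwise by $1/|k|$, and deduce sharpness from delta potentials at the origin with coupling sent to infinity. The only cosmetic differences are that the paper phrases the bound via the Birman--Schwinger operator norm rather than the $L^\infty$ integral equation, and your sharpness family $c_s=\sigma+s\,\e{\ii(\theta-\pi)/2}$ hits the prescribed argument $\theta$ exactly for each $s$, which is slightly cleaner than the paper's choice (note also a harmless sign slip in your prefactor $1/(2\ii k)$, which should be $-1/(2\ii k)$, though this does not affect $|G_\sigma|$ or the limiting relation $\ii k=\sigma-c$).
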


Note that if $ \sigma <0$ a bound of the form (\ref{eq:maincor2}) can not hold since there exists a non-positive eigenvalue even in the case $V = 0$.

\begin{remark}
 In the \emph{self-adjoint} case inequality \eqref{eq:spruch} for whole-line operators is accompanied by bounds
\begin{equation}
 \label{eq:keller}
|\lambda|^\gamma \leq \frac{\Gamma(\gamma+1)}{\sqrt\pi\, \Gamma(\gamma+3/2)} \left(\frac{\gamma-1/2}{\gamma+1/2}\right)^{\gamma-1/2} \int_{-\infty}^\infty |V(x)|^{\gamma+1/2} \dd x \
\end{equation}
for $\gamma>1/2$; see \cite{K, LT}. In contrast, in the \emph{non-selfadjoint} case it seems to be unknown whether the condition $V\in L^{\gamma+1/2}(\R)$ for some $1/2<\gamma<\infty$ implies that all eigenvalues in $\C\setminus[0,\infty)$ lie inside a finite disc; see \cite{DN,FLLS,LS,S} for partial results in this direction. We would like to remark here that even if a bound of the form \eqref{eq:keller} were true in the non-selfadjoint case with $1/2<\gamma<\infty$, then (in contrast to \eqref{eq:spruch} for $\gamma=1/2$) the constant would have to be strictly larger than in the self-adjoint case. To see this, consider $V(x)=\frac{\alpha(\alpha+1)}{\cosh^2 x}$ with $\Re\alpha>0$. Then $\lambda=-\alpha^2$ is an eigenvalue (with eigenfunction $(\cosh x)^{-\alpha}$) and the supremum
$$
\sup_{\Re\alpha\geq 0} \frac{|\lambda|^\gamma}{\int_{-\infty}^\infty |V(x)|^{\gamma+1/2} \dd x}
= \left( \int_{-\infty}^\infty \frac{dx}{\cosh^2 x} \right)^{-1} \sup_{\Re\alpha \geq 0} \frac{|\alpha|^{\gamma-1/2}}{|\alpha+1|^{\gamma+1/2}}
$$
is clearly attained for purely imaginary values of $\alpha$.
\end{remark}

%%%%%%%%%%%%%%%%%%%%%%%%%%%%%%%%%%%%%%%%%%%%%%%%%%%%%%%%%

\section{Proofs}

\begin{proof}[Proof of Theorem \ref{main}]
Assume that $-\partial^2 \psi(x) - V(x)\psi(x) = - \mu \psi(x)$ with $\psi(0) = 0$, $\psi\not\equiv 0$ and $\mu =-\lambda\in \C \setminus (-\infty,0]$. Then the Birman-Schwinger operator
$$
V^{1/2} \frac 1{-\partial^2 + \mu} |V|^{1/2} \,,
\qquad V^{1/2} := (\sgn V) |V|^{1/2} \,,
$$
has an eigenvalue $1$, and hence its operator norm is greater or equal to 1.

The integral kernel of this operator equals 
$$
V(x)^{1/2} \frac{ \e{-\sqrt\mu|x-y|} - \e{-\sqrt\mu (x+y) }}{2\sqrt\mu} |V(y)|^{1/2}\,,
$$
and hence
$$
\left|\left( \psi\,, V^{1/2} \frac 1{-\partial^2 + \mu} |V|^{1/2}\, \phi \right) \right| \leq \frac {\|V\|_1} {2\sqrt{|\mu|}} \|\psi\|_2 \|\phi\|_2 \sup_{x, y\geq 0} \left| \e{-\sqrt\mu|x-y|} - \e{-\sqrt\mu (x+y) } \right|\,.
$$
Without loss of generality, we can take the supremum over the smaller set $x\geq y\geq 0$. Then
$$
\sup_{x\geq y\geq 0} \left| \e{-\sqrt\mu(x-y)} - \e{-\sqrt\mu (x+y) } \right| = \sup_{x\geq y\geq 0} \e{-x\,\Re \sqrt\mu} \left| \e{\sqrt\mu y} - \e{-\sqrt\mu y } \right|\,.
$$
Since $\Re\sqrt\mu>0$, the supremum over $x$ is achieved at $x=y$, and hence
$$
\sup_{x, y\geq 0} \left| \e{-\sqrt\mu(x-y)} - \e{-\sqrt\mu (x+y) } \right| = \sup_{y\geq 0} \left| 1 - \e{-2\sqrt\mu y}\right|\,.
$$
If we write $\mu = -|\mu|\e{i\theta}$ with $0<\theta<2\pi$, then
$$
 \sup_{y\geq 0} \left| 1 - \e{-2\sqrt\mu y}\right| = \sup_{y\geq 0} \left| \e{2 i \sqrt{|\mu|} \cos(\theta/2) y} - \e{-2 \sqrt{|\mu|} \sin(\theta/2) y}\right| = g(\cot(\theta/2))
$$
with $g$ from \eqref{eq:g}. Hence we have shown that
\begin{equation}\label{ineq}
\left\| V^{1/2} \frac 1{-\partial^2 + \mu} |V|^{1/2}\, \right\| \leq \frac {\|V\|_1} {2\sqrt{|\mu|}}\ g(\cot(\theta/2))
\,.
\end{equation}
Since the left side is greater or equal to 1, as remarked above, we obtain \eqref{eq:main}.

For $V(x)=c\delta(x-b)$ the Birman-Schwinger operator reduces to the number $c(1-e^{-2\sqrt\mu b})/(2\sqrt{\mu})$ and inequality (\ref{ineq}) becomes equality provided $\sqrt\mu b$ satisfies $|1-e^{-2\sqrt\mu b}| = g(\cot(\theta/2))$. For given $m>0$ and $\theta\in(0,2\pi)$ this determines $b$ and $|c|$. The phase of $c$ is found from the equation
$c(1-e^{-2\sqrt\mu b})/(2\sqrt{\mu})=1$.
\end{proof}

\begin{proof}[Proof of Lemma \ref{g}]
By continuity for $a>0$ there exists an optimizer $y_0$ such that $g(a) = |\e{iay_0} - \e{-y_0}|$. We claim that $y_0$ satisfies $\pi/3 < ay_0 \leq \pi$. To see the lower bound, note that $|\e{iay}-\e{-y}|\geq 1$ if and only if $2\cos(ay)\leq \e{-y}$. In particular, $\cos(ay_0)<1/2$.
For the upper bound,  if $2\pi > ay>\pi$ and $2\cos(ay)<\e{-y}$, replacing $ya$ by $2\pi - ya$ leads to a  contradiction. Similarly, if $ya>2\pi$ it can replaced by $ya-2\pi$ in order exclude that $y$ is the optimizer. 

It is elementary to check that $|\e{iay}-\e{-y}|$ is monotone increasing in $a$ for every fixed $y$ with $0\leq y\leq \pi/a$. Since we know already that $y_0\leq \pi/a$,  the monotonicity of $g$ follows.

Plugging in $y=\pi/a$, we obtain $g(a)\geq  1+\e{-\pi/a}\geq 2-\pi/a$. 
For large enough $a$, it follows from this that $y_0$ is close to $\pi/a$. In particular, $y_0\geq \pi/(2a)$,  and hence  $|\e{iay_0}-1|\geq g(a) \geq 2-\pi/a$. This implies that $y_0 = \pi/a + O(a^{-2})$, and thus $g(a) = 2 - \pi/a + O(a^{-2})$, as claimed.

For an upper bound for small $a$, we use the triangle inequality and the bound $ay_0\geq \pi/3$ to find $g(a) \leq 1 + e^{-y_0} \leq 1 + \e{-\pi/(3a)}$. 
\end{proof}

\begin{proof}[Proof of Proposition~\ref{prop1}]
We proceed as in the proof of Theorem~\ref{main}. The Birman-Schwinger operator has the kernel
$$
V(x)^{1/2} \frac{ \e{-\sqrt\mu|x-y|} + \frac{\sqrt\mu - \sigma}{\sqrt\mu + \sigma} \e{-\sqrt\mu (x+y) }}{2\sqrt\mu} |V(y)|^{1/2}\,.
$$
The assertion follows as above using that
$$
\sup_{y\geq 0} \left| 1 + \frac{\sqrt\mu - \sigma}{\sqrt\mu + \sigma} \e{- 2 \sqrt\mu y } \right| \leq 2
$$
by the triangle inequality and the fact that $|\sqrt\mu-\sigma|\leq |\sqrt\mu+\sigma|$. The fact that the bound (\ref{eq:maincor2}) is sharp for given argument $0<\theta<2\pi$ of the eigenvalue $\lambda$ follows by choosing $V(x) = - c i \e{i\theta/2} \delta(x)$ for $c>0$ and letting $c\to \infty$.  
\end{proof}

\end{document}